\journal{Finite Fields Appl. 50 (2018), 272-278. https://doi.org/10.1016/j.ffa.2017.12.004}
\newtheorem{thm}{Theorem}[section]
\newtheorem{cor}[thm]{Corollary}
\newtheorem{lem}[thm]{Lemma}
\theoremstyle{definition}
\theoremstyle{remark}
\newtheorem{rem}[thm]{Remark}
\newproof{pf}{Proof}
\newproof{pot}{Proof of Theorem \ref{thm2}}
\begin{document}

\begin{frontmatter}



\title{Normal bases and irreducible polynomials\tnoteref{}}


\author{Hua Huang}

\author{Shanmeng Han}

\author{Wei Cao\corref{cor1}}
\ead{caowei@nbu.edu.cn}
\cortext[cor1]{Corresponding author.}

\address{Department of Mathematics, Ningbo University, Ningbo, Zhejiang 315211, P.R. China}

\begin{abstract}
Let $\mathbb{F}_q$ denote the finite field of $q$ elements and $\mathbb{F}_{q^n}$ the degree $n$ extension of $\mathbb{F}_q$. A normal basis of $\mathbb{F}_{q^n}$ over $\mathbb{F} _q$ is a basis of the form $\{\alpha,\alpha^q,\dots,\alpha^{q^{n-1}}\}$. An irreducible polynomial in $\mathbb{F} _q[x]$ is called an $N$-polynomial if its roots are linearly independent over $\mathbb{F} _q$. Let $p$ be the characteristic of $\mathbb{F} _q$. Pelis et al. showed that every monic irreducible polynomial with degree $n$ and nonzero trace is an $N$-polynomial provided that $n$ is either a power of $p$ or a prime different from $p$ and $q$ is a primitive root modulo $n$. Chang et al. proved that the converse is also true. By comparing the number of $N$-polynomials with that of irreducible polynomials with nonzero traces, we present an alternative treatment to this problem and show that all the results mentioned above can be easily deduced from our main theorem.
\end{abstract}

\begin{keyword}
Finite field  \sep Normal basis \sep $N$-polynomial \sep $q$-polynomial

\MSC[2008]     11T06 \sep 05A15
\end{keyword}

\end{frontmatter}



\section{Introduction}

Let $p$ be a prime number, $n\geq2$ be an integer. Let $\mathbb{F} _q$
denote the finite field of $q$ elements with characteristic $p$,
and $\mathbb{F} _{q^n}$ be its extension of degree $n$. A {\em normal
basis} of $\mathbb{F} _{q^n}$ over $\mathbb{F} _q$ is a basis of the form
$\{\alpha,\alpha^q,\dots,\alpha^{q^{n-1}}\}$, i.e., a basis
consisting of all the algebraic conjugates of a fixed element. We
say that $\alpha$ generates a normal basis, or $\alpha$ is a
normal element of $\mathbb{F} _{q^n}$ over $\mathbb{F} _q$. In either case we are
referring to the fact that the elements
$\alpha,\alpha^q,\dots,\alpha^{q^{n-1}}$ are linearly independent
over $\mathbb{F} _q$. In 1850, Eisenstein \cite{Eisenstein} first
conjectured the existence of normal bases for finite fields, and
its proof was given by Sch$\rm \ddot{o}$nemann \cite{Schonemann}
later in 1850 for the case $\mathbb{F} _p$ and then by Hensel
\cite{Hensel} in 1888 for arbitrary finite fields. Normal bases
over finite fields have proved very useful for fast arithmetic
computations with potential applications to coding theory and to
cryptography, see, e.g., \cite{Gao,Lidl,Menezes}.

An irreducible polynomial in $\mathbb{F} _q[x]$ is called an {\em
$N$-polynomial} if its roots are linearly independent over $\mathbb{F}
_q$. The minimal polynomial of any element in a normal basis
$\alpha,\alpha^q,\dots,\alpha^{q^{n-1}}$ is
$m(x)=\prod_{i=0}^{n-1}(x-\alpha^{q^{i}})\in \mathbb{F} _q[x]$, which is
irreducible over $\mathbb{F} _q$. The elements in a normal basis are
exactly the roots of an $N$-polynomial. Hence an $N$-polynomial is
just another way of describing a normal basis. In general, it is
not easy to check whether an irreducible polynomial is an
$N$-polynomial. However in certain cases, the thing may be very simple
according to Theorems 1.1 and 1.2 below.

\begin{thm}\label{thm1}
{\rm (Pelis \cite{Perlis})} Let $n=p^e$ with $e\geqslant 1$. Then
an irreducible polynomial $f(x)=x^n+a_1x^{n-1}+\dots+a_n\in \mathbb{F}
_q[x]$ is an $N$-polynomial if and only if $a_1\neq0$.
\end{thm}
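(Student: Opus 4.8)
The plan is to work inside $\mathbb{F}_{q^n}$ regarded as a module over the polynomial ring $\mathbb{F}_q[x]$, where $x$ acts through the Frobenius automorphism $\sigma\colon u\mapsto u^q$. Fix a root $\alpha$ of $f$, so that the roots of $f$ are precisely $\alpha,\sigma\alpha,\dots,\sigma^{n-1}\alpha$ and $f(x)=\prod_{i=0}^{n-1}(x-\alpha^{q^i})$; expanding the product shows $a_1=-\sum_{i=0}^{n-1}\alpha^{q^i}=-\mathrm{Tr}_{\mathbb{F}_{q^n}/\mathbb{F}_q}(\alpha)$. Thus "$a_1\neq 0$" means exactly "$\mathrm{Tr}(\alpha)\neq 0$", while "$f$ is an $N$-polynomial" means exactly that $\{\alpha,\sigma\alpha,\dots,\sigma^{n-1}\alpha\}$ is an $\mathbb{F}_q$-basis of $\mathbb{F}_{q^n}$, i.e. that $\alpha$ is a normal element. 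Since $\sigma^n=\mathrm{id}$, the order of $\alpha$ (the monic generator $g_\alpha(x)$ of $\{h\in\mathbb{F}_q[x]:h(\sigma)\alpha=0\}$) divides $x^n-1$.

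For the ($\Rightarrow$) direction no hypothesis on $n$ is needed: if the conjugates of $\alpha$ are $\mathbb{F}_q$-linearly independent, then the all-ones combination $\alpha+\sigma\alpha+\cdots+\sigma^{n-1}\alpha=\mathrm{Tr}(\alpha)$ cannot vanish, so $a_1\neq 0$. For the converse I would use the hypothesis $n=p^e$ crucially: in characteristic $p$ one has $x^n-1=x^{p^e}-1=(x-1)^{p^e}$, so the only monic divisors of $x^n-1$ are the powers $(x-1)^k$ with $0\le k\le n$, whence $g_\alpha(x)=(x-1)^k$ for some such $k$. Invoking the standard module facts that $\alpha,\sigma\alpha,\dots,\sigma^{\deg g_\alpha-1}\alpha$ are linearly independent and span $\mathbb{F}_q[\sigma]\alpha$, so that $\dim_{\mathbb{F}_q}\mathbb{F}_q[\sigma]\alpha=\deg g_\alpha$, one sees that $\alpha$ is normal if and only if $\deg g_\alpha=n$, i.e. if and only if $k=n$.

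It then remains only to detect $k=n$ from the trace, and here I would use the identity $1+x+\cdots+x^{n-1}=\frac{x^n-1}{x-1}=(x-1)^{n-1}$ in $\mathbb{F}_q[x]$, which gives $\mathrm{Tr}(\alpha)=(1+\sigma+\cdots+\sigma^{n-1})\alpha=(\sigma-1)^{n-1}\alpha$. If $a_1\neq 0$, then $\mathrm{Tr}(\alpha)\neq 0$, so $(x-1)^{n-1}$ does not annihilate $\alpha$; combined with $g_\alpha(x)=(x-1)^k$, $k\le n$, this forces $k>n-1$, hence $k=n$, hence $\alpha$ is normal and $f$ is an $N$-polynomial. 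The whole argument is short once the right language is set up; I expect the only real work to be expository — fixing the $\mathbb{F}_q[x]$-module structure and the equivalence "$\deg g_\alpha=n\iff\alpha$ normal" cleanly — and the single genuinely load-bearing input is the collapse $x^n-1=(x-1)^{p^e}$, which forces the lattice of possible orders $g_\alpha$ to be a chain and thereby lets one non-vanishing statement about $\mathrm{Tr}(\alpha)$ pin down $g_\alpha$ exactly.
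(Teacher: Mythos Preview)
Your argument is correct and is essentially the classical structural proof: view $\mathbb{F}_{q^n}$ as an $\mathbb{F}_q[x]$-module via Frobenius, use that $x^n-1=(x-1)^{p^e}$ forces the order $g_\alpha$ to be a power of $x-1$, and identify the trace with $(\sigma-1)^{n-1}\alpha$ to pin down $g_\alpha=(x-1)^n$.

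The paper, however, takes a genuinely different route. It does not argue directly at the level of a single $\alpha$; instead it proves the theorem by a counting argument. Using the explicit formula $v(n,q)=q^{n-m}\prod_{d\mid m}(q^{\tau(d)}-1)^{\phi(d)/\tau(d)}$ for the number of normal elements and the formula $\frac{q-1}{q}\sum_{d\mid m}\mu(d)q^{n/d}$ for $n$ times the number of monic irreducibles of degree $n$ with nonzero trace, the paper shows (Theorem~1.4) that the former is always at most the latter, with equality precisely when $n=p^e$ or when $n$ is a prime different from $p$ with $q$ a primitive root modulo $n$. Since every $N$-polynomial has nonzero trace (your forward direction), the set of $N$-polynomials sits inside the set of irreducibles with nonzero trace; equality of cardinalities in the case $n=p^e$ then forces the two sets to coincide, which is exactly Theorem~1.1.

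Your approach is shorter and self-contained for this single statement, and it makes transparent \emph{why} the prime-power case is special (the divisor lattice of $x^n-1$ collapses to a chain). The paper's approach costs more in setup but pays off globally: the same inequality and its equality analysis simultaneously yield Theorem~1.2 and, crucially, the converse Theorem~1.3, which your direct module argument does not address.
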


\begin{thm}\label{thm2}
{\rm (Pei, Wang and Omura \cite{Pei})} Let $n$ be a prime
different from $p$ and $q$ be a primitive root modulo $n$. Then
an irreducible polynomial $f(x)=x^n+a_1x^{n-1}+\dots+a_n\in \mathbb{F}
_q[x]$ is an $N$-polynomial if and only if $a_1\neq0$.
\end{thm}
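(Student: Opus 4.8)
The plan is to prove the theorem by counting, comparing the number of $N$-polynomials of degree $n$ with the number of monic irreducible polynomials of degree $n$ having nonzero trace, rather than following the original module-theoretic argument. The easy implication needs no hypothesis on $n$ or $q$: if $\alpha$ is a root of $f$ then $a_1 = -\mathrm{Tr}_{\mathbb{F}_{q^n}/\mathbb{F}_q}(\alpha) = -(\alpha + \alpha^q + \cdots + \alpha^{q^{n-1}})$, so $a_1 = 0$ produces the nontrivial $\mathbb{F}_q$-linear relation $\alpha + \alpha^q + \cdots + \alpha^{q^{n-1}} = 0$ among the roots, and $f$ fails to be an $N$-polynomial. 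Writing $\mathcal{N}$ for the set of $N$-polynomials of degree $n$ over $\mathbb{F}_q$ and $\mathcal{I}^{*}$ for the set of monic irreducible polynomials of degree $n$ over $\mathbb{F}_q$ with $a_1 \neq 0$, we have just shown $\mathcal{N} \subseteq \mathcal{I}^{*}$; it therefore suffices to prove $|\mathcal{N}| = |\mathcal{I}^{*}|$, for then the inclusion becomes an equality, which is exactly the assertion.

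To count $\mathcal{N}$, I would use that $\alpha \in \mathbb{F}_{q^n}$ is normal over $\mathbb{F}_q$ precisely when it generates $\mathbb{F}_{q^n}$ as a module over $R := \mathbb{F}_q[x]/(x^n-1)$, with $x$ acting as the Frobenius $y \mapsto y^q$; by the normal basis theorem this module is free of rank one, so the number of normal elements is $|R^{*}|$, and $|\mathcal{N}| = |R^{*}|/n$ since every $N$-polynomial has exactly $n$ distinct roots, all normal. Under the hypotheses $n$ is prime and $n \neq p$, so $x^n-1 = (x-1)\Phi_n(x)$ with $\Phi_n(1) = n \neq 0$ in $\mathbb{F}_q$, and since $q$ is a primitive root modulo $n$ the order of $q$ in $(\mathbb{Z}/n\mathbb{Z})^{*}$ is $n-1 = \deg\Phi_n$, whence $\Phi_n$ is irreducible over $\mathbb{F}_q$. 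The Chinese Remainder Theorem gives $R \cong \mathbb{F}_q \times \mathbb{F}_{q^{n-1}}$, so $|R^{*}| = (q-1)(q^{n-1}-1)$ and $|\mathcal{N}| = (q-1)(q^{n-1}-1)/n$.

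To count $\mathcal{I}^{*}$, note that $n$ prime makes the total number of monic irreducible polynomials of degree $n$ over $\mathbb{F}_q$ equal to $(q^n-q)/n$, their roots being precisely the $q^n-q$ elements of $\mathbb{F}_{q^n}\setminus\mathbb{F}_q$. Among these, the ones with $a_1 = 0$ correspond to the trace-zero elements: the trace is a surjective $\mathbb{F}_q$-linear functional on $\mathbb{F}_{q^n}$, with $q^{n-1}$ elements in its kernel, and the only $a \in \mathbb{F}_q$ with trace $na = 0$ is $a = 0$, since $p \nmid n$; hence there are $q^{n-1}-1$ trace-zero elements in $\mathbb{F}_{q^n}\setminus\mathbb{F}_q$, giving $(q^{n-1}-1)/n$ irreducible polynomials of degree $n$ with $a_1 = 0$. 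Therefore $|\mathcal{I}^{*}| = (q^n-q)/n - (q^{n-1}-1)/n = (q-1)(q^{n-1}-1)/n = |\mathcal{N}|$, which finishes the argument.

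I expect the step requiring the most care to be the identification $|\mathcal{N}| = |R^{*}|/n$ — that is, recasting the normal basis theorem as the statement that $\mathbb{F}_{q^n}$ is a free $R$-module of rank one, so that normal elements correspond bijectively to units of $R$ — together with the observation that the primitive-root hypothesis is exactly what keeps $\Phi_n$ irreducible over $\mathbb{F}_q$; once these structural facts are in place the two cardinalities fall out by elementary counting. The same scheme will also recover Theorem \ref{thm1}: there $x^n - 1 = (x-1)^{p^e}$, so $R$ is local with residue field $\mathbb{F}_q$, $|R^{*}| = q^n - q^{n-1}$, and one checks likewise that the number of irreducible polynomials of degree $n$ with nonzero trace is again $(q^n - q^{n-1})/n = |\mathcal{N}|$.
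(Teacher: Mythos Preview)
Your proof is correct and follows essentially the same strategy as the paper: establish the easy inclusion $\mathcal{N}\subseteq\mathcal{I}^{*}$ and then show the two sets have the same cardinality. The only difference is cosmetic---the paper quotes the general formulas $v(n,q)=q^{n-m}\prod_{d\mid m}(q^{\tau(d)}-1)^{\phi(d)/\tau(d)}$ and $(q-1)I_q(n,t)=\frac{q-1}{qn}\sum_{d\mid m}\mu(d)q^{n/d}$ from the literature and then checks that both specialize to $(q-1)(q^{n-1}-1)/n$ under the hypotheses, whereas you derive those same numbers from scratch via $R\cong\mathbb{F}_q\times\mathbb{F}_{q^{n-1}}$ and an elementary trace count.
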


In 2001, Chang, Truong and Reed \cite{Chang} furthermore proved
that the conditions in Theorems 1.1 and 1.2 are also necessary.

\begin{thm}\label{thm3}
If every irreducible polynomial $f(x)=x^n+a_1x^{n-1}+\dots+a_n\in
\mathbb{F} _q[x]$ with $a_1\neq0$ is an $N$-polynomial, then $n$ is either
a power of $p$ or a prime different from $p$ and $q$ is a
primitive root modulo $n$.
\end{thm}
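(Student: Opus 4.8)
The plan is to prove the contrapositive by a structural comparison that is, at bottom, a count. The first step is the easy observation that every $N$-polynomial of degree $n$ is irreducible with nonzero trace: if a root $\alpha$ satisfies $\mathrm{Tr}_{\mathbb{F}_{q^n}/\mathbb{F}_q}(\alpha)=\alpha+\alpha^q+\cdots+\alpha^{q^{n-1}}=-a_1=0$, that is already a nontrivial $\mathbb{F}_q$-linear relation among $\alpha,\alpha^q,\dots,\alpha^{q^{n-1}}$. Hence, writing $\mathcal N(n)$ for the number of monic $N$-polynomials of degree $n$ and $I^*(n)$ for the number of monic irreducibles of degree $n$ with $a_1\neq 0$, we always have $\mathcal N(n)\le I^*(n)$, with equality precisely when every such irreducible is an $N$-polynomial. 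So the hypothesis is $\mathcal N(n)=I^*(n)$, and it suffices to exhibit, whenever $n$ is neither a power of $p$ nor a prime $\ne p$ with $q$ a primitive root mod $n$, a single irreducible polynomial of degree $n$ with $a_1\neq 0$ that fails to be an $N$-polynomial; this gives $\mathcal N(n)<I^*(n)$, and the theorem follows by contraposition.

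To produce such a polynomial I would pass to the $\mathbb{F}_q[x]$-module $\mathbb{F}_{q^n}\cong\mathbb{F}_q[x]/(x^n-1)$, where $x$ acts as the Frobenius $\sigma:\alpha\mapsto\alpha^q$ (that $x^n-1$ is the minimal polynomial of $\sigma$ and that this module is cyclic is the module-theoretic form of the normal basis theorem). For $\alpha\in\mathbb{F}_{q^n}$ write $\mathrm{Ord}(\alpha)$ for the monic generator of $\mathrm{Ann}(\alpha)$, a divisor of $x^n-1$. Then $\alpha$ is normal iff $\mathrm{Ord}(\alpha)=x^n-1$; $\mathrm{Tr}(\alpha)=0$ iff $\mathrm{Ord}(\alpha)\mid(x^n-1)/(x-1)$, because $\mathrm{Tr}(\alpha)=(1+\sigma+\cdots+\sigma^{n-1})(\alpha)$; and $\alpha$ has degree $n$ over $\mathbb{F}_q$ iff $\mathrm{Ord}(\alpha)\nmid x^d-1$ for every proper divisor $d\mid n$, since $\mathbb{F}_{q^d}=\ker(\sigma^d-1)$. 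Moreover every monic $g\mid x^n-1$ occurs as $\mathrm{Ord}(\alpha)$ for some $\alpha$ (namely the one corresponding to $(x^n-1)/g$). Thus $\mathcal N(n)=I^*(n)$ holds iff every monic $g\mid x^n-1$ with $g\nmid x^d-1$ for all proper $d\mid n$ satisfies $g=x^n-1$ or $g\mid(x^n-1)/(x-1)$; to finish it is enough to violate this for every $n$ outside the two asserted families.

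I would split this into three cases, using the factorization of $x^n-1$ over $\mathbb{F}_q$. \emph{Case A} ($p\nmid n$, $n$ composite): take $g=(x-1)\,g_0$ with $g_0$ an irreducible factor of the cyclotomic polynomial $\Phi_n$ over $\mathbb{F}_q$; then $x-1\mid g$ gives $g\nmid(x^n-1)/(x-1)$, the bound $\deg g\le 1+\varphi(n)<n$ gives $g\ne x^n-1$, and $g\mid x^d-1$ iff $n\mid d$, which fails for proper $d\mid n$. \emph{Case B} ($n=\ell$ prime $\ne p$, $q$ not a primitive root mod $\ell$): now $\Phi_\ell$ splits over $\mathbb{F}_q$ into $t\ge 2$ irreducible factors of degree $f=\mathrm{ord}_\ell(q)$, and the same recipe $g=(x-1)\cdot(\text{one such factor})$ works; in counting terms this is exactly $\mathcal N(\ell)=\tfrac1\ell(q-1)(q^{f}-1)^{t}<\tfrac1\ell(q-1)(q^{\ell-1}-1)=I^*(\ell)$, using $(q^{f}-1)^{t}<q^{ft}-1$ for $t\ge 2$ with $ft=\ell-1$. \emph{Case C} ($p\mid n$ but $n\ne p^e$): write $n=p^e m$ with $e\ge1$, $m\ge2$, $p\nmid m$, so $x^n-1=(x^m-1)^{p^e}$ and both $x-1$ and $\Phi_m$ occur in $x^n-1$ with multiplicity exactly $p^e$; take $g=(x-1)^{p^e}g_0$ with $g_0$ an irreducible factor of $\Phi_m$ over $\mathbb{F}_q$. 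Then the $(x-1)$-multiplicity of $(x^n-1)/(x-1)$ is only $p^e-1$, so $g\nmid(x^n-1)/(x-1)$; $\deg g\le p^e+\varphi(m)<p^e m=n$, so $g\ne x^n-1$; and for a proper divisor $d=p^i m'$ of $n$ ($0\le i\le e$, $m'\mid m$), $g\mid x^d-1=(x^{m'}-1)^{p^i}$ would force $p^e\le p^i$ from the $x-1$ part and $g_0\mid x^{m'}-1$ from the rest, i.e. $i=e$ and $m'=m$, contradicting properness.

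Cases A--C exhaust all $n\ge2$ outside the two families, so in each such case one gets a degree-$n$ element $\alpha$ with $\mathrm{Ord}(\alpha)=g\ne x^n-1$ and $g\nmid(x^n-1)/(x-1)$; its minimal polynomial is then an irreducible of degree $n$ with $a_1\ne0$ that is not an $N$-polynomial, completing the argument. I expect Case C to be the genuine obstacle: there one must simultaneously raise the linear factor $x-1$ to its full multiplicity $p^e$ (to force nonzero trace) and keep the remaining part of $g$ from sinking into a proper subfield (to force degree $n$ and non-normality), and the key point is that taking $g_0$ inside $\Phi_m$ --- the top cyclotomic layer of the $p$-free part of $x^n-1$ --- meets both demands at once. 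If one preferred to argue purely by counting, one could instead use the product formula $\mathcal N(n)=\tfrac1n\prod_i\big(q^{d_i\epsilon_i}-q^{d_i(\epsilon_i-1)}\big)$ coming from $x^n-1=\prod_i g_i^{\epsilon_i}$ together with $I^*(n)=\tfrac{q-1}{q}I_q(n)$ for $p\nmid n$ (where $I_q(n)$ is the total number of monic degree-$n$ irreducibles), but then the strict inequality for $n$ squarefree composite needs a separate size estimate rather than dropping out of a single construction.
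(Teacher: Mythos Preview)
Your argument is correct, and it takes a genuinely different route from the paper's. The paper derives Theorem~\ref{thm3} from its Main Theorem by a counting comparison: it shows that the left side of inequality~(1.1) counts (up to the factor $n$) $N$-polynomials while the right side counts irreducibles with nonzero trace, then characterizes equality first by a $q$-adic valuation argument (ruling out $n=mp^e$ with $e\ge 1$, $m>1$, and reducing the $p\nmid n$ case to squarefree $n$), and finally by a somewhat delicate analysis with linearized $q$-polynomials and a generalized Euler function to handle squarefree $n$ with at least two prime factors. You instead work directly in the cyclic $\mathbb{F}_q[x]$-module $\mathbb{F}_{q^n}\cong\mathbb{F}_q[x]/(x^n-1)$ and, for each $n$ outside the two families, exhibit an explicit proper divisor $g\mid x^n-1$ that simultaneously forces degree $n$, nonzero trace, and non-normality. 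This is more elementary and fully constructive: it needs neither the explicit formulae for $v(n,q)$ and $I_q(n,t)$ nor any $q$-polynomial machinery, and it handles the mixed case $p\mid n$, $n\neq p^e$ uniformly rather than via a separate valuation trick. What the paper's approach buys in return is the quantitative inequality~(1.1) itself, which has independent content beyond the qualitative statement of Theorem~\ref{thm3}; your closing remark about the pure counting route correctly anticipates this trade-off.
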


By comparing the number of $N$-polynomials with that of irreducible polynomials over $\mathbb{F} _q$,
we will present an alternative treatment to this problem. Throughout the rest of the paper, write $n=mp^e$ with $p\nmid
m$. It will be seen that Theorems \ref{thm1}, \ref{thm2} and \ref{thm3} are all the direct consequences of the following theorem.

\begin{thm}{\rm (Main Theorem)}\label{mainthm}
  The following inequality holds
\begin{equation}
  q^{n-m}\prod_{d|m}(q^{\tau(d)}-1)^{\phi(d)/\tau(d)}
  \leqslant\frac{q-1}{q}\sum_{d|m}\mu(d)q^{n/d},  \label{eq1.1}
\end{equation}
where $\tau(d)$ is the order of $q$ modulo $d$, $\phi(d)$ is the
Euler totient function, and $\mu(d)$ is the M$\rm{\ddot{o}}$bius
function. Furthermore, \textup{(\ref{eq1.1})} becomes an equality if and
only if $n=p^e$, or $n$ is a prime different from $p$ and $q$ is a
primitive root modulo $n$.
\end{thm}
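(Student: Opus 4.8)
The plan is to recognize that both sides of \textup{(\ref{eq1.1})} count explicit subsets of $\mathbb{F}_{q^n}$, one contained in the other. Give $\mathbb{F}_{q^n}$ its $\mathbb{F}_q[x]$-module structure in which $x$ acts as the Frobenius $y\mapsto y^q$, so that $\mathbb{F}_{q^n}\cong R:=\mathbb{F}_q[x]/(x^n-1)$ and the normal elements are exactly the units of the ring $R$. Since $n=mp^e$ with $p\nmid m$ we have $x^n-1=(x^m-1)^{p^e}$, and $x^m-1=\prod_{d\mid m}\Phi_d(x)$, where the cyclotomic polynomial $\Phi_d$ splits over $\mathbb{F}_q$ into $\phi(d)/\tau(d)$ irreducible factors of degree $\tau(d)$. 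By the Chinese Remainder Theorem $R$ is a product of local rings $\mathbb{F}_q[x]/g(x)^{p^e}$ with residue field of size $q^{\deg g}$, each contributing $q^{p^e\deg g}-q^{(p^e-1)\deg g}$ units; grouping by $d$ and using $\sum_{d\mid m}\phi(d)=m$ collapses $|R^{*}|$ to the left-hand side of \textup{(\ref{eq1.1})}. Hence the left-hand side is the number of normal elements of $\mathbb{F}_{q^n}$ over $\mathbb{F}_q$ (equivalently, $n$ times the number of $N$-polynomials of degree $n$).

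Next I show the right-hand side equals the number $T(n)$ of elements of $\mathbb{F}_{q^n}$ of degree $n$ over $\mathbb{F}_q$ with nonzero trace. Since $\operatorname{Tr}_{\mathbb{F}_{q^N}/\mathbb{F}_q}$ is surjective with fibres of size $q^{N-1}$, the field $\mathbb{F}_{q^N}$ has $(q-1)q^{N-1}$ elements of nonzero trace; sorting them by degree $d\mid N$ and using $\operatorname{Tr}_{\mathbb{F}_{q^N}/\mathbb{F}_q}(\alpha)=(N/d)\operatorname{Tr}_{\mathbb{F}_{q^d}/\mathbb{F}_q}(\alpha)$, only the divisors $d$ with $p\nmid N/d$ contribute, each with $T(d)$ (the rest contribute $0$). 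For $N=m_1p^e$ with $m_1\mid m$ those divisors are precisely the $d=d_1p^e$ with $d_1\mid m_1$, so $(q-1)q^{m_1p^e-1}=\sum_{d_1\mid m_1}T(d_1p^e)$ for every $m_1\mid m$; Möbius inversion over the divisor lattice of $m$ yields $T(n)=\frac{q-1}{q}\sum_{d\mid m}\mu(d)q^{n/d}$, the right-hand side of \textup{(\ref{eq1.1})}.

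Every normal element has degree $n$ (its $n$ conjugates, being linearly independent, are distinct) and nonzero trace (otherwise $\alpha+\alpha^q+\dots+\alpha^{q^{n-1}}=0$ contradicts linear independence), so the normal elements form a subset of the degree-$n$ elements of nonzero trace; this gives the inequality. For the equality clause I localize once more: since $p\nmid m=h(1)$ with $h(x)=(x^m-1)/(x-1)$, the factors $(x-1)^{p^e}$ and $h(x)^{p^e}$ of $x^n-1$ are coprime, so $R\cong A\times B$ with $A=\mathbb{F}_q[x]/(x-1)^{p^e}$ and $B=\mathbb{F}_q[x]/h(x)^{p^e}$. The trace corresponds to multiplication by $(x^n-1)/(x-1)=(x-1)^{p^e-1}h(x)^{p^e}$, which is $0$ on $B$ and a unit times $(x-1)^{p^e-1}$ on $A$; hence $\operatorname{Tr}(a,b)\ne0$ iff $a\in A^{*}$, while $(a,b)$ is normal iff $a\in A^{*}$ and $b\in B^{*}$. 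A short computation with the orders of the local components shows that, for $a\in A^{*}$, the pair $(a,b)$ has degree $n$ iff the following support condition holds: for every prime $\ell\mid m$, some component of $b$ over a divisor $d\mid m$ with $v_\ell(d)=v_\ell(m)$ is nonzero (here $v_\ell$ denotes the $\ell$-adic valuation). Therefore equality in \textup{(\ref{eq1.1})} holds if and only if this support condition on $b$ already forces $b\in B^{*}$.

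Finally, a short case analysis on $m$ and $e$ produces the asserted dichotomy. If $m=1$ then $B=0$, there is nothing to force, equality holds, and $n=p^e$. Suppose $m>1$. If $e\ge1$, some local factor of $B$ is not a field, and taking $b$ a nonzero non-unit there and $0$ elsewhere satisfies the support condition but is not a unit, so equality fails. If $e=0$ and $m$ is composite, pick $d_0$ with $1<d_0<m$ and take $b$ a unit in the $\Phi_m$-block and $0$ in the $\Phi_{d_0}$-block: again the support condition holds while $b\notin B^{*}$, so equality fails. If $e=0$ and $m$ is a prime $\ell$, then $B=\mathbb{F}_q[x]/\Phi_\ell(x)$ and the support condition reads simply $b\ne0$; this forces $b\in B^{*}$ exactly when $\Phi_\ell$ is irreducible over $\mathbb{F}_q$, i.e. when $q$ is a primitive root modulo $\ell$ --- otherwise a $b$ nonzero in one factor of $B$ and $0$ in another shows equality fails. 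Collecting the cases, equality holds precisely when $n=p^e$ or $n$ is a prime different from $p$ with $q$ a primitive root modulo $n$. I expect the main obstacle to be this ``only if'' part: ruling out every case beyond the two listed requires controlling the $\mathbb{F}_q[x]$-module structure of $\mathbb{F}_{q^n}$ finely enough to exhibit, in each such case, an explicit irreducible polynomial of degree $n$ with nonzero trace that is not an $N$-polynomial, and the degree-$n$ criterion above is exactly the tool that makes this possible.
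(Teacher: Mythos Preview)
Your argument is correct and complete in spirit, and it reaches the same conclusion as the paper while organising the ``only if'' direction differently. Both proofs identify the two sides of (\ref{eq1.1}) with the number of normal elements and the number of degree-$n$ elements of nonzero trace, respectively, and deduce the inequality from the obvious containment. The paper then treats the equality case in two stages: first a $q$-adic valuation comparison of $\mathfrak{L}$ and $\mathfrak{R}$ forces $m=1$ when $e\ge1$ and forces $n$ square-free when $e=0$; second, for square-free $n$, it passes through the intermediate quantity $\prod_{d\mid n}(q^{\phi(d)}-1)$ (the count for the coarser cyclotomic factorisation) and uses an inclusion--exclusion on linearised polynomials to exhibit strict inequality. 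Your route avoids both the valuation trick and the intermediate bound: the Chinese Remainder decomposition $R\cong A\times B$ together with your explicit ``support'' description of the degree-$n$ condition lets you dispose of all cases by a single uniform mechanism --- produce a $b\in B$ that meets the support condition but is not a unit. This is more conceptual and handles the non-square-free and $e\ge1$ cases on the same footing as the rest, whereas the paper's proof is a little more ad hoc but brings in the generalised Euler $\Phi$-function and the comparison $\Phi(l)\le\Phi_{\mathfrak f}(l)$, which are of independent interest.

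One small point to tighten: in the case $m>1$, $e\ge1$, you write ``some local factor of $B$ is not a field, and taking $b$ a nonzero non-unit there and $0$ elsewhere satisfies the support condition.'' For the support condition to hold with $b$ supported in a single local factor, that factor must lie in the $\Phi_m$-block (so that $v_\ell(m)=v_\ell(m)$ for every prime $\ell\mid m$); an arbitrary local factor of $B$ need not work when $m$ has more than one prime divisor. Since every local factor of $B$ is non-reduced when $e\ge1$, simply specifying a local factor of the $\Phi_m$-block fixes this, and the rest of your case analysis goes through unchanged.
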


\section{Two counting formulae and partial proof of Theorem \ref{mainthm}}
In this section, we will first explain two counting formulae appearing in Theorem 1.4 and then give a partial proof to it.
Let $v(n,q)$ denote the number of normal elements of $\mathbb{F}
_{q^n}$ over $\mathbb{F} _q$. Hensel \cite{Hensel} and Ore \cite{Ore}
obtained an expression of $v(n,q)$ by the factorization of
$x^n-1$. Akbik \cite{Akbik} and Gathen and Giesbrecht
\cite{Gathen} gave the explicit formula for $v(n,q)$ as follows,
which need not factorize $x^n-1$:
\begin{thm} The number of normal elements of $\mathbb{F}
_{q^n}$ over $\mathbb{F} _q$ is given by
\begin{equation*}
    v(n,q)=q^{n-m}\prod_{d|m}(q^{\tau(d)}-1)^{\phi(d)/\tau(d)}.
\end{equation*}
\end{thm}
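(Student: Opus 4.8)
The plan is to recast $v(n,q)$ as the order of a unit group in a truncated polynomial ring and then evaluate it through the factorization of $x^n-1$. First I would turn $\mathbb{F}_{q^n}$ into a module over $\mathbb{F}_q[x]$ by letting $x$ act through the Frobenius automorphism $\sigma\colon\alpha\mapsto\alpha^q$. Since $\sigma^n=\mathrm{id}$, the polynomial $x^n-1$ annihilates the module; and by the linear independence of the distinct automorphisms $\sigma^0,\sigma^1,\dots,\sigma^{n-1}$ (Dedekind--Artin), no nonzero polynomial of degree $<n$ does, so the minimal polynomial of $\sigma$ is exactly $x^n-1$. Its degree equals $\dim_{\mathbb{F}_q}\mathbb{F}_{q^n}=n$, hence $\sigma$ is a cyclic operator and $\mathbb{F}_{q^n}\cong\mathbb{F}_q[x]/(x^n-1)$ as $\mathbb{F}_q[x]$-modules. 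Under this isomorphism an element is \emph{normal} precisely when it generates the module, and the generators of a cyclic module $R/I$ are exactly the units of the ring $R/I$. Writing $U(f):=\#\bigl(\mathbb{F}_q[x]/(f)\bigr)^{\times}$, I therefore obtain
\[
v(n,q)=U(x^n-1),\qquad U\Bigl(\textstyle\prod_i p_i^{e_i}\Bigr)=\prod_i q^{(\deg p_i)(e_i-1)}\bigl(q^{\deg p_i}-1\bigr),
\]
the product running over the distinct monic irreducible factors $p_i$.

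Next I would factor $x^n-1$. With $n=mp^e$ and $p\nmid m$, the Frobenius identity in characteristic $p$ gives $x^n-1=(x^m-1)^{p^e}$, and $x^m-1$ is separable because $p\nmid m$. Grouping $m$-th roots of unity by their order yields $x^m-1=\prod_{d\mid m}\Phi_d(x)$ with $\Phi_d$ the $d$-th cyclotomic polynomial; and since $\gcd(q,d)=1$ for every $d\mid m$, the map $\zeta\mapsto\zeta^q$ permutes the $\phi(d)$ primitive $d$-th roots in orbits of common size $\tau(d)=\mathrm{ord}_d(q)$. Hence over $\mathbb{F}_q$ each $\Phi_d$ splits into $\phi(d)/\tau(d)$ distinct irreducibles, each of degree $\tau(d)$, and every such irreducible factor occurs in $x^n-1$ with multiplicity exactly $p^e$.

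Finally I would assemble the count. Applying the multiplicativity of $U$ to $x^n-1=\prod_{d\mid m}\prod_k g_{d,k}^{\,p^e}$, where for each $d$ there are $\phi(d)/\tau(d)$ factors $g_{d,k}$ of degree $\tau(d)$, each factor contributes $q^{\tau(d)(p^e-1)}\bigl(q^{\tau(d)}-1\bigr)$, so that
\[
v(n,q)=\prod_{d\mid m} q^{(p^e-1)\phi(d)}\,\bigl(q^{\tau(d)}-1\bigr)^{\phi(d)/\tau(d)}.
\]
The $q$-exponents then collapse via $\sum_{d\mid m}\phi(d)=m$, giving $\sum_{d\mid m}(p^e-1)\phi(d)=(p^e-1)m=n-m$, which is precisely the asserted formula.

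The step needing the most care is the bridge in the first paragraph: pinning down that the minimal polynomial of $\sigma$ is all of $x^n-1$, so that $\mathbb{F}_{q^n}$ really is a cyclic $\mathbb{F}_q[x]$-module and normal elements correspond bijectively to units of $\mathbb{F}_q[x]/(x^n-1)$. Once the identification $v(n,q)=U(x^n-1)$ is secured, the remainder is bookkeeping of the cyclotomic factorization, whose only subtleties are the orbit-count $\phi(d)/\tau(d)$ for the degrees of the irreducible factors and the exponent simplification through $\sum_{d\mid m}\phi(d)=m$.
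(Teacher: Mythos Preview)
Your argument is correct. The identification of $\mathbb{F}_{q^n}$ with the cyclic $\mathbb{F}_q[x]$-module $\mathbb{F}_q[x]/(x^n-1)$ via Frobenius, the equivalence between normal elements and module generators (hence units), the unit-count $U(p^e)=q^{(\deg p)(e-1)}(q^{\deg p}-1)$ for irreducible $p$, and the cyclotomic factorization $x^n-1=(x^m-1)^{p^e}=\prod_{d\mid m}\Phi_d(x)^{p^e}$ with each $\Phi_d$ splitting into $\phi(d)/\tau(d)$ irreducibles of degree $\tau(d)$ are all sound, and the exponent bookkeeping via $\sum_{d\mid m}\phi(d)=m$ is carried out correctly.

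There is, however, nothing to compare against: the paper does not supply its own proof of this theorem. It is quoted as a known result, attributed to Akbik and to von~zur~Gathen--Giesbrecht, and used as input for the counting argument behind the Main Theorem. Your proof is in fact the classical route to this formula (the Hensel--Ore identity $v(n,q)=\Phi_q(x^n-1)$ followed by explicit evaluation through the cyclotomic decomposition), which is essentially what those cited references do. The paper later invokes the same cyclotomic factorization (its Lemma~3.5) and the same generalized Euler function $\Phi$ in a different part of the argument, so your write-up is entirely compatible with the paper's framework.
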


Since every element in a normal basis generates the same basis, we get
\begin{cor}\label{lem1}
The number of normal bases of $\mathbb{F} _{q^n}$ over $\mathbb{F} _q$ is given by
\begin{equation*}
    \frac{v(n,q)}{n}=\frac{q^{n-m}}{n}\prod_{d|m}(q^{\tau(d)}-1)^{\phi(d)/\tau(d)}.
\end{equation*}
\end{cor}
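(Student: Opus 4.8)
The plan is to exhibit an exact $n$-to-$1$, surjective correspondence between normal elements and normal bases, so that Corollary \ref{lem1} follows immediately from the preceding theorem by dividing $v(n,q)$ by $n$. The natural map sends a normal element $\alpha$ to the normal basis $B_\alpha=\{\alpha,\alpha^q,\dots,\alpha^{q^{n-1}}\}$ that it generates. Surjectivity is immediate from the definition, since every normal basis is by construction of this form; the whole content lies in showing that each fibre of this map has exactly $n$ elements.

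First I would record that a normal element $\alpha$ has degree exactly $n$ over $\mathbb{F}_q$: if $\alpha$ lay in a proper subfield $\mathbb{F}_{q^d}$ with $d\mid n$ and $d<n$, then $\alpha^{q^d}=\alpha$ would force repetitions among $\alpha,\alpha^q,\dots,\alpha^{q^{n-1}}$, so these $n$ elements would span a space of dimension at most $d<n$ and could not be linearly independent. Hence the $n$ conjugates listed in $B_\alpha$ are pairwise distinct, and $B_\alpha$ has cardinality exactly $n$.

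The key step is to verify that each of these $n$ conjugates is itself a normal element generating the very same basis. For $\beta=\alpha^{q^j}$, applying the Frobenius map $x\mapsto x^q$ merely permutes the list $\alpha,\alpha^q,\dots,\alpha^{q^{n-1}}$ cyclically (using $\alpha^{q^n}=\alpha$), so $\{\beta,\beta^q,\dots,\beta^{q^{n-1}}\}$ is a reindexing of $B_\alpha$; in particular its members remain linearly independent, whence $\beta$ is normal and $B_\beta=B_\alpha$. Thus every one of the $n$ elements of $B_\alpha$ lies in the fibre over $B_\alpha$. Conversely, if $\gamma$ is any normal element with $B_\gamma=B_\alpha$, then $\gamma$ belongs to its own generated basis and hence to $B_\alpha$; so the fibre over $B_\alpha$ consists of precisely the $n$ elements of $B_\alpha$ and no others.

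I expect the only genuine subtlety to be confirming that each fibre has size exactly $n$ rather than merely at most $n$, which is exactly what the degree argument secures by guaranteeing the $n$ conjugates are genuinely distinct. Once the map is shown to be surjective and $n$-to-$1$, the number of normal bases equals the number of normal elements divided by $n$, namely $v(n,q)/n$, and substituting the expression for $v(n,q)$ supplied by the preceding theorem yields the stated formula.
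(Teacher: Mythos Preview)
Your argument is correct and follows exactly the paper's approach: the paper simply remarks that ``every element in a normal basis generates the same basis'' and immediately deduces the corollary by dividing $v(n,q)$ by $n$. You have merely spelled out in full the $n$-to-$1$ correspondence that this sentence encapsulates, including the degree argument ensuring the $n$ conjugates are distinct.
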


The {\em trace of a degree $n$ polynomial $f(x)$ over $\mathbb{F} _q$} is
defined to be the coefficient of $x^{n-1}$. For a given $t\in \mathbb{F}_q^*$, let $I_q(n,t)$ denote
the number of monic irreducible polynomials of degree $n$ over $\mathbb{F}
_q$ with trace $t$. Relying on a generalized
M$\rm{\ddot{o}}$bius inversion formula, Ruskey, Miers and Sawada
\cite{Ruskey} showed that
\begin{thm}Let $t\in \mathbb{F}_q^*$, then
\begin{equation}
    I_q(n,t)=\frac{1}{qn}\sum_{d|m}\mu(d)q^{n/d}. \label{eq2.3}
\end{equation}
\end{thm}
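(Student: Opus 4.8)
The plan is to count the relevant irreducible polynomials by passing to their roots in $\mathbb{F}_{q^n}$ and exploiting the additive structure of the trace map, then to recover (\ref{eq2.3}) via an ordinary M\"obius inversion rather than the generalized inversion of \cite{Ruskey}. Write $\mathrm{Tr}$ for the absolute trace $\mathrm{Tr}_{\mathbb{F}_{q^n}/\mathbb{F}_q}$. If $f(x)=x^n+a_1x^{n-1}+\dots+a_n$ is a monic irreducible polynomial of degree $n$ with a root $\alpha$, then its roots are exactly the conjugates $\alpha,\alpha^q,\dots,\alpha^{q^{n-1}}$, so $\mathbb{F}_q(\alpha)=\mathbb{F}_{q^n}$ and, by Vieta, the polynomial trace $a_1$ equals $-\mathrm{Tr}(\alpha)$. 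Since each such $f$ has exactly $n$ roots, all of them full generators of $\mathbb{F}_{q^n}$ and all sharing the same trace, the assignment $f\mapsto\{\text{roots}\}$ is $n$-to-one; hence $I_q(n,t)=\tfrac1n M(n,-t)$, where $M(n,s)$ denotes the number of $\alpha\in\mathbb{F}_{q^n}$ with $\mathbb{F}_q(\alpha)=\mathbb{F}_{q^n}$ and $\mathrm{Tr}(\alpha)=s$. The first task is therefore to compute $M(n,s)$ for $s\in\mathbb{F}_q^*$.

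Second, I would observe that $M(n,s)$ is independent of the choice of $s\in\mathbb{F}_q^*$: for any $c\in\mathbb{F}_q^*$, multiplication by $c$ is a bijection of $\mathbb{F}_{q^n}$ that preserves the degree of an element over $\mathbb{F}_q$ (because $\mathbb{F}_q(c\alpha)=\mathbb{F}_q(\alpha)$) and scales the trace, $\mathrm{Tr}(c\alpha)=c\,\mathrm{Tr}(\alpha)$; as $\mathbb{F}_q^*$ acts transitively on itself, this forces $M(n,s)=M(n,s')$ for all nonzero $s,s'$. Denote this common value $A(n)$. In particular $I_q(n,t)=A(n)/n$ is already seen to be independent of $t\in\mathbb{F}_q^*$, as (\ref{eq2.3}) demands, so the sign in $-t$ is harmless and it remains only to evaluate $A(n)$.

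Third, I would set up the counting identity that feeds the inversion. Fix $s\in\mathbb{F}_q^*$. The trace map is $\mathbb{F}_q$-linear and surjective, so the fiber $\{\alpha\in\mathbb{F}_{q^n}:\mathrm{Tr}(\alpha)=s\}$ has exactly $q^{n-1}$ elements. Partitioning this fiber according to the exact degree $d\mid n$ of each $\alpha$ over $\mathbb{F}_q$ and using transitivity of the trace, $\mathrm{Tr}(\alpha)=\tfrac nd\,\mathrm{Tr}_{\mathbb{F}_{q^d}/\mathbb{F}_q}(\alpha)$ for $\alpha\in\mathbb{F}_{q^d}$, the crucial dichotomy appears: if $p\mid\tfrac nd$ then the factor $\tfrac nd$ vanishes in $\mathbb{F}_q$ and no element of degree $d$ can have nonzero trace, whereas if $p\nmid\tfrac nd$ the condition $\mathrm{Tr}(\alpha)=s$ becomes $\mathrm{Tr}_{\mathbb{F}_{q^d}/\mathbb{F}_q}(\alpha)=s(\tfrac nd)^{-1}\neq0$, contributing $A(d)$ elements. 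Writing $n=mp^e$, the divisors $d$ with $p\nmid\tfrac nd$ are precisely $d=p^e d'$ with $d'\mid m$, whence $q^{n-1}=\sum_{d'\mid m}A(p^e d')$. Running the same argument over each intermediate extension $\mathbb{F}_{q^{m'p^e}}$ with $m'\mid m$ gives $\sum_{d'\mid m'}A(p^e d')=q^{m'p^e-1}$ for every $m'\mid m$.

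Finally, M\"obius inversion over the divisor lattice of $m$ yields $A(p^e m)=\sum_{d\mid m}\mu(d)\,q^{(m/d)p^e-1}=\tfrac1q\sum_{d\mid m}\mu(d)q^{n/d}$, using $n/d=(m/d)p^e$; dividing by $n$ gives (\ref{eq2.3}). I expect the main obstacle to be the third step: one must treat the characteristic-$p$ vanishing of the trace carefully, correctly identify that the only contributing degrees are the multiples $p^ed'$ of $p^e$, and verify that the resulting summation identity is self-consistent across all subextensions $m'\mid m$, so that a single M\"obius inversion over the divisors of $m$ applies. The scaling argument of the second step and the $n$-to-one passage from polynomials to roots are routine by comparison.
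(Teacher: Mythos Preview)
The paper does not actually prove this statement; it merely quotes the result of Ruskey, Miers and Sawada \cite{Ruskey}, remarking that their argument rests on a \emph{generalized} M\"obius inversion. Your proposal is a complete and correct self-contained proof, and it is genuinely more elementary than the cited approach: by counting elements of $\mathbb{F}_{q^n}$ in a nonzero trace fiber according to their exact degree, and using the transitivity relation $\mathrm{Tr}_{\mathbb{F}_{q^n}/\mathbb{F}_q}(\alpha)=\tfrac{n}{d}\,\mathrm{Tr}_{\mathbb{F}_{q^d}/\mathbb{F}_q}(\alpha)$ to see that only divisors of the form $d=p^e d'$ with $d'\mid m$ contribute, you collapse the problem to an \emph{ordinary} M\"obius inversion over the divisor lattice of $m$. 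That is precisely what explains why the sum in (\ref{eq2.3}) runs over $d\mid m$ rather than $d\mid n$. The scaling bijection showing $M(n,s)$ is constant on $\mathbb{F}_q^*$, the $n$-to-one passage from roots to polynomials, and the verification that the summatory identity $q^{m'p^e-1}=\sum_{d'\mid m'}A(p^ed')$ holds uniformly for all $m'\mid m$ (so that M\"obius inversion is legitimate) are all handled correctly.
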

Observing that $I_q(n,t)$ in (\ref{eq2.3}) is independent of $t$, we have
\begin{cor}\label{lem2}
The number of monic irreducible polynomials of degree $n$ over $\mathbb{F}
_q$ with nonzero traces is given by
\begin{equation*}
    \sum_{t\in \mathbb{F} _q^*}I_q(n,t)=\frac{q-1}{qn}\sum_{d|m}\mu(d)q^{n/d}.
\end{equation*}
\end{cor}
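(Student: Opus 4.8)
The statement follows at once from the theorem just quoted, so the plan is brief. First I would observe that a monic polynomial of degree $n$ has a single well-defined trace in $\mathbb{F}_q$, so the monic irreducible polynomials of degree $n$ over $\mathbb{F}_q$ with nonzero trace split into the disjoint classes indexed by $t\in\mathbb{F}_q^*$, the class attached to $t$ having exactly $I_q(n,t)$ members. Hence the number to be computed is, by definition, $\sum_{t\in\mathbb{F}_q^*}I_q(n,t)$; nothing is proved in this step beyond making the bookkeeping explicit.

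Next I would invoke formula (\ref{eq2.3}): for \emph{every} $t\in\mathbb{F}_q^*$ one has $I_q(n,t)=\frac{1}{qn}\sum_{d|m}\mu(d)q^{n/d}$, a quantity that does not depend on $t$. (Conceptually, this independence is clear because multiplying the root of a degree-$n$ monic irreducible by a fixed $c\in\mathbb{F}_q^*$ scales its trace by $c$, so $\mathbb{F}_q^*$ permutes these counts transitively.) Since $|\mathbb{F}_q^*|=q-1$, summing the constant yields
\begin{equation*}
\sum_{t\in\mathbb{F}_q^*}I_q(n,t)=(q-1)\cdot\frac{1}{qn}\sum_{d|m}\mu(d)q^{n/d}=\frac{q-1}{qn}\sum_{d|m}\mu(d)q^{n/d},
\end{equation*}
which is the asserted formula. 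There is no real obstacle here: all the substance sits inside (\ref{eq2.3}), and what remains is a single line of arithmetic.

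Finally, for orientation I would record why this Corollary is worth isolating. Paired with Corollary \ref{lem1}, it supplies exactly the data the Main Theorem compares: the minimal polynomial $m(x)=\prod_{i=0}^{n-1}(x-\alpha^{q^i})$ of a normal element $\alpha$ is a monic irreducible polynomial of degree $n$, and its trace equals $-\mathrm{Tr}_{\mathbb{F}_{q^n}/\mathbb{F}_q}(\alpha)$, which is nonzero precisely because $\alpha,\alpha^q,\dots,\alpha^{q^{n-1}}$ are $\mathbb{F}_q$-linearly independent. Thus the degree-$n$ $N$-polynomials form a subset of the polynomials counted in this Corollary, while their number coincides with the number of normal bases $v(n,q)/n$ given by Corollary \ref{lem1}; multiplying the resulting inequality by $n$ is exactly (\ref{eq1.1}). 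The genuinely delicate point — deciding when this inclusion is an equality, i.e.\ when every monic irreducible of degree $n$ with nonzero trace is already an $N$-polynomial — lies beyond this Corollary and is handled later in the paper.
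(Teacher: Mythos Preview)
Your proposal is correct and matches the paper's own argument exactly: the paper simply observes that the right-hand side of (\ref{eq2.3}) is independent of $t$ and sums over the $q-1$ nonzero trace values. Your additional remarks (the conceptual reason for the independence and the link to (\ref{eq1.1})) are accurate but go beyond what the paper records here.
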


{\em Partial Proof of Theorem \ref{mainthm}} Inequality (\ref{eq1.1}) comes from the fact that an $N$-polynomials must be an irreducible polynomial with nonzero trace and Corollaries \ref{lem1} and \ref{lem2}. Let $\mathfrak{L}$ and $\mathfrak{R}$ denote respectively the left- and right- hand sides of (\ref{eq1.1}). For the ``if" part, it is trivial to verify. So we focus on the ``only if" part. Given an integer $t$, denote by $v_q(t)$ the $q$-adic valuation of $t$. Assume $n=mp^e$ with $e\geq 1$ and $\mathfrak{L}=\mathfrak{R}$. We claim $m=1$. Otherwise, let $m=p_1^{t_1}\dots p_k^{t_k}$ be the standard form of prime factorization with $p_i$ distinct primes and $t_i\geq1$. We see that $v_q(\mathfrak{L})=n-m=p_1^{t_1}\dots p_k^{t_k}(p^e-1)$ and $v_q(\mathfrak{R})=p_1^{t_1-1}\dots p_k^{t_k-1}p^e-1$. If there is some $t_i\geq2$, then $p_i|v_q(\mathfrak{L})$ but $p_i\nmid v_q(\mathfrak{R})$, a contradiction. So $t_1=\dots t_k=1$, which yields $v_q(\mathfrak{L})>v_q(\mathfrak{R})$, a contradiction too. Thus our claim is proved. Now assume $e=0$ and $\mathfrak{L}=\mathfrak{R}$ and again let $n=p_1^{t_1}\dots p_k^{t_k}$ be the standard form of prime factorization. Since $0=v_q(\mathfrak{L})=v_q(\mathfrak{R})=p_1^{t_1-1}\dots p_k^{t_k-1}-1$, we have $t_1=\dots t_k=1$, i.e., $n$ must be square free. It remains to show that if $k\geq2$ or $k=1$ but $q$ is not a primitive root modulo $n$ then $\mathfrak{L}<\mathfrak{R}$. This work is nontrivial and we will devote the next section to it.
\begin{rem}
  Just from the inequality $\mathfrak{L}\leq\mathfrak{R}$ and the ``if" part, one can easily deduce Theorems \ref{thm1} and \ref{thm2}.
\end{rem}

\section{$q$-polynomials and continuation of proof of Theorem \ref{mainthm}}
To finish the proof of Theorem \ref{mainthm}, we need some results about $q$-polynomials that will be briefly listed below; refer to \cite[Chapters 2-3]{Lidl} for details. A polynomial of the form $L(x)=\sum_{i=0}^nc_ix^{q^i}\in \mathbb{F} _q[x]$ is called a \emph{$q$-polynomial} (or a \emph{linearized polynomial}), and its \emph{conventional $q$-associate} is defined to be $l(x)=\sum_{i=0}^nc_ix^i$. Given two $q$-polynomials $L_1(x)$ and $L_2(x)$, we define \emph{symbolic multiplication} by $L_1(x)\bigotimes L_2(x)=L_1(L_2(x))$. Similarly, we can define \emph{symbolic division}, \emph{symbolic factorization}, \emph{symbolic irreducibility}, etc. for $q$-polynomials.
\begin{thm}{\rm (\cite[Lemma 3.59]{Lidl})}\label{ffirst}
  Let $L_1(x)$ and $L_2(x)$ be $q$-polynomials over $\mathbb{F}_q$ with conventional $q$-associates $l_1(x)$ and $l_2(x)$. Then $l(x)=l_1(x)l_2(x)$ and $L(x)=L_1(x)\bigotimes L_2(x)$ are $q$-associates of each other.
\end{thm}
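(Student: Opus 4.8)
The plan is to compute the symbolic product $L(x)=L_1(x)\bigotimes L_2(x)=L_1(L_2(x))$ explicitly and compare its coefficients with those of the $q$-associate of $l(x)=l_1(x)l_2(x)$. Write $L_1(x)=\sum_i b_ix^{q^i}$ and $L_2(x)=\sum_j c_jx^{q^j}$, so that $l_1(x)=\sum_i b_ix^i$ and $l_2(x)=\sum_j c_jx^j$. Then $l(x)=\sum_k d_kx^k$ with $d_k=\sum_{i+j=k}b_ic_j$, and the $q$-associate of $l(x)$ is $\sum_k d_kx^{q^k}$. Thus it suffices to prove the single identity $L_1(L_2(x))=\sum_k d_kx^{q^k}$.

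First I would substitute $L_2(x)$ into $L_1$, obtaining $L_1(L_2(x))=\sum_i b_i\bigl(L_2(x)\bigr)^{q^i}$, and then expand each power $\bigl(L_2(x)\bigr)^{q^i}$. The essential step is that the Frobenius map $y\mapsto y^q$ is additive in characteristic $p$ (the ``freshman's dream''), so raising to the $q^i$-th power distributes over the sum defining $L_2$:
\[
\bigl(L_2(x)\bigr)^{q^i}=\sum_j\bigl(c_jx^{q^j}\bigr)^{q^i}=\sum_j c_j^{q^i}x^{q^{i+j}}.
\]
At this point I would invoke the hypothesis that the coefficients $c_j$ lie in $\mathbb{F}_q$, so they are fixed by the Frobenius, $c_j^{q^i}=c_j$ for every $i$; hence $\bigl(L_2(x)\bigr)^{q^i}=\sum_j c_jx^{q^{i+j}}$.

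Combining, $L_1(L_2(x))=\sum_{i,j}b_ic_j\,x^{q^{i+j}}$, and reindexing this finite double sum by $k=i+j$ gives $\sum_k\bigl(\sum_{i+j=k}b_ic_j\bigr)x^{q^k}=\sum_k d_kx^{q^k}$, which is exactly the $q$-associate of $l(x)$. This shows that $L(x)$ and $l(x)$ are $q$-associates of each other. The only point requiring care — and hence the main obstacle to state precisely rather than a computational difficulty — is the interchange of the power map with the summation in $L_2$, which rests on both the additivity of the Frobenius over $\mathbb{F}_q$ and on the coefficients being Frobenius-fixed; once these are in hand, the remainder is a routine reindexing of a finite double sum.
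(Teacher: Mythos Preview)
Your proof is correct and is exactly the standard argument. The paper itself does not supply a proof of this statement but merely cites \cite[Lemma 3.59]{Lidl}; your computation---using the additivity of the Frobenius and the fact that the coefficients lie in $\mathbb{F}_q$ so are Frobenius-fixed---is precisely the proof given in that reference.
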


The following criterion is an immediate consequence of the theorem above.
\begin{cor}\label{cor3}
  Let $L_1(x)$ and $L(x)$ be $q$-polynomials over $\mathbb{F}_q$ with conventional $q$-associates $l_1(x)$ and $l(x)$. Then $L_1(x)$ symbolically divides $L(x)$ if and only if $l_1(x)$ divides $l(x)$. In particular, $L(x)$ is symbolically irreducible over $\mathbb{F}_q$ if and only if $l_1(x)$ is irreducible over $\mathbb{F}_q$.
\end{cor}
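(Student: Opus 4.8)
The plan is to derive Corollary \ref{cor3} directly from Theorem \ref{ffirst} by tracking how symbolic divisibility of $q$-polynomials corresponds, under the $q$-associate map $L(x)=\sum c_i x^{q^i}\mapsto l(x)=\sum c_i x^i$, to ordinary divisibility of the associates. The key observation is that this correspondence is a ring isomorphism: symbolic multiplication of $q$-polynomials matches ordinary multiplication of their conventional $q$-associates, which is exactly the content of Theorem \ref{ffirst}.

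First I would prove the forward direction of the divisibility equivalence. Suppose $L_1(x)$ symbolically divides $L(x)$, meaning there is a $q$-polynomial $L_2(x)$ with $L(x)=L_1(x)\bigotimes L_2(x)$. Letting $l_2(x)$ be the conventional $q$-associate of $L_2(x)$, Theorem \ref{ffirst} tells me that $L_1(x)\bigotimes L_2(x)$ has conventional $q$-associate $l_1(x)l_2(x)$. Since the map from a $q$-polynomial to its conventional $q$-associate is a bijection, $l(x)=l_1(x)l_2(x)$, so $l_1(x)$ divides $l(x)$. For the converse, suppose $l_1(x)\mid l(x)$, so $l(x)=l_1(x)l_2(x)$ for some $l_2(x)\in\mathbb{F}_q[x]$; take $L_2(x)$ to be the $q$-polynomial whose conventional $q$-associate is $l_2(x)$, and apply Theorem \ref{ffirst} again to conclude $L_1(x)\bigotimes L_2(x)$ has conventional $q$-associate $l_1(x)l_2(x)=l(x)$, hence equals $L(x)$ by bijectivity. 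Thus $L_1(x)$ symbolically divides $L(x)$.

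For the symbolic-irreducibility statement, I would unwind the definition: $L(x)$ is symbolically irreducible precisely when its only symbolic divisors are trivial, and the divisibility equivalence just established transports this to the condition that $l(x)$ has only trivial ordinary divisors, i.e. $l(x)$ is irreducible. One caution is that the statement as printed reads ``$l_1(x)$ is irreducible,'' which appears to be a typo for $l(x)$; I would phrase the proof so that the conclusion is about the conventional $q$-associate $l(x)$ of $L(x)$ itself, since that is the object naturally in correspondence with $L(x)$.

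The main obstacle, such as it is, lies in justifying the bijectivity of the $q$-associate correspondence and handling degrees carefully: I should note that $L\mapsto l$ is inverted by reading the coefficients back, and that the symbolic product has the expected degree so the factorizations match up without spurious leading-coefficient issues. Beyond this bookkeeping the argument is essentially a restatement of Theorem \ref{ffirst}, so I do not expect any genuinely hard step; the work is purely in phrasing the equivalence cleanly and flagging the typo rather than in any substantive computation.
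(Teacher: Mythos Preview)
Your proposal is correct and matches the paper's approach: the paper gives no explicit proof, merely noting that the corollary is an immediate consequence of Theorem \ref{ffirst}, which is precisely what you unwind via the $q$-associate bijection. Your observation about the apparent typo ($l_1(x)$ versus $l(x)$) is also apt.
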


Let $L(x)\in\mathbb{F}_q[x]$ be a nonzero $q$-polynomial with $q^n$ simple roots and let
\begin{equation}\label{sff}
 \mathfrak{F}: \quad L(x)=\underbrace{L_1(x)\bigotimes\cdots\bigotimes L_1(x)}_{e_1}\bigotimes\cdots\bigotimes \underbrace{L_r(x)\bigotimes\cdots\bigotimes L_r(x)}_{e_r}
\end{equation}
be the symbolic factorization of $L(x)$ with pairwise relatively prime (not necessarily symbolically irreducible) $q$-polynomials $L_i(x)$ over $\mathbb{F}_q$ and $\deg(L_i(x))=q^{n_i}$. For $1\leq i\leq r$, denote by $K_i(x)$ the polynomial obtained from the symbolic factorization $\mathfrak{F}$ in (\ref{sff}) of $L(x)$ by omitting the symbolic factor $L_i(x)$. Clearly, $\deg(K_i(x))=q^{n-n_i}$. Let $S_\mathfrak{F}(L)$ be the set of the roots of $L(x)$ that are not the roots of some $K_i(x)$. By the inclusion-exclusion principle, the formula for $|S_\mathfrak{F}(L)|$ is given by
\begin{align}
|S_\mathfrak{F}(L)|= & \ q^n-\sum_{i=1}^rq^{n-n_i}+\sum_{1\leq i<j\leq r}q^{n-n_i-n_j}-\dots+(-1)^rq^{n-n_1-\cdots-n_r}\nonumber\\
   = & \ q^n(1-q^{-n_1})\cdots(1-q^{-n_r}).\label{ffn}
\end{align}
The expression can also be interpreted in a different way. Let $l(x)$ be the conventional $q$-associate of $L(x)$. Then
\begin{equation}\label{rrr}
 \mathfrak{f}: \quad   l(x)=l_1(x)^{e_1}l_2(x)^{e_2}\cdots l_r(x)^{e_r}
\end{equation}
is the canonical factorization of $l(x)$ in $\mathbb{F}_q[x]$, where $l_i(x)$, the conventional $q$-associate of $L_i(x)$, are pairwise relatively prime (not necessarily irreducible) and $\deg(l_i(x))=n_i$. Let $S_\mathfrak{f}(l)$ denote the set of polynomials in $\mathbb{F}_q[x]$ that are of smaller degree than $l(x)$ as well as not divisible by any $l_i(x)$ for all $1\leq i\leq r$ in the factorization $\mathfrak{f}$ in (\ref{rrr}). The \emph{generalized Euler's $\phi$-function} for $l(x)$ under such factorization is defined to be $\Phi_\mathfrak{f}(l):=|S_\mathfrak{f}(l)|$.
\begin{rem}
The sets $S_\mathfrak{F}(L)$ and $S_\mathfrak{f}(l)$ depend on the concrete factorizations $\mathfrak{F}$ and $\mathfrak{f}$, respectively. If all the polynomials $L_i(x)$ in the symbolic factorization $\mathfrak{F}$ are symbolically irreducible, then all the polynomials $l_i(x)$ in the factorization $\mathfrak{f}$ are irreducible in the ordinary sense by Corollary \ref{cor3} and we simply write $\Phi(l):=\Phi_\mathfrak{f}(l)$ for this case which coincides with the original definition in \cite[p. 122]{Lidl}.
\end{rem}

\begin{thm}\label{thm11}
  \begin{enumerate}
    \item[{\rm(i)}] $|S_\mathfrak{F}(L)|=\Phi_\mathfrak{f}(l)$.
    \item[{\rm(ii)}] Suppose that $l_1(x)=g(x)h(x)$ where $g(x)$ and $h(x)$ are relatively prime with $\deg(g)\geq1$ and $\deg(h)\geq1$. Then for the new factorization
    \begin{equation*}
         \mathfrak{f}': \quad   l(x)=g(x)^{e_1}h(x)^{e_1}l_2(x)^{e_2}\cdots l_r(x)^{e_r}
    \end{equation*}
          we have $\Phi_\mathfrak{f'}(l)<\Phi_\mathfrak{f}(l)$. In particular, $\Phi(l)\leq\Phi_\mathfrak{f}(l)$.
  \end{enumerate}
\end{thm}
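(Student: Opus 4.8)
The common engine behind both parts is a single inclusion--exclusion count, so I would first isolate it as a product formula. For \emph{any} factorization $\mathfrak g\colon l=g_1^{a_1}\cdots g_s^{a_s}$ of $l$ into pairwise relatively prime polynomials $g_i$ with $\deg g_i=m_i$ (note $\sum_i a_i m_i=\deg l=n$, so in particular $\sum_i m_i\le n$), I claim
\[
  \Phi_{\mathfrak g}(l)=q^{n}\prod_{i=1}^{s}\bigl(1-q^{-m_i}\bigr).
\]
To see this, identify the polynomials of degree $<n$ with the residues modulo $l$. For $T\subseteq\{1,\dots,s\}$, a polynomial of degree $<n$ is divisible by every $g_i$ with $i\in T$ exactly when it is divisible by $\prod_{i\in T}g_i$ (the $g_i$ being pairwise coprime), and there are precisely $q^{\,n-\sum_{i\in T}m_i}$ such polynomials; summing with the usual signs gives $\Phi_{\mathfrak g}(l)=\sum_{T}(-1)^{|T|}q^{\,n-\sum_{i\in T}m_i}=q^{n}\prod_i(1-q^{-m_i})$.

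Part (i) would then be immediate: applying the formula to the factorization $\mathfrak f$ gives $\Phi_{\mathfrak f}(l)=q^{n}\prod_{i=1}^{r}(1-q^{-n_i})$, which is exactly the value of $|S_{\mathfrak F}(L)|$ already obtained in (\ref{ffn}). One can alternatively argue bijectively: realizing the $q^{n}$ roots of $L$ as the $\mathbb{F}_q[x]$-module $\mathbb{F}_q[x]/(l)$ with $x$ acting as the $q$-th power map, the roots of $K_i$ correspond to the residues divisible by $l_i$, so $S_{\mathfrak F}(L)$ corresponds bijectively to the residues divisible by no $l_i$, i.e.\ to $S_{\mathfrak f}(l)$.

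For part (ii), set $a=\deg g\ge1$ and $b=\deg h\ge1$, so $a+b=n_1$. Since $g$ and $h$ are coprime and each divides $l_1$, which is coprime to $l_2,\dots,l_r$, the factors appearing in $\mathfrak f'$ are again pairwise relatively prime, and the product formula applies to both $\mathfrak f$ and $\mathfrak f'$:
\[
  \Phi_{\mathfrak f'}(l)=q^{n}(1-q^{-a})(1-q^{-b})\prod_{i=2}^{r}(1-q^{-n_i}),\qquad
  \Phi_{\mathfrak f}(l)=q^{n}\bigl(1-q^{-(a+b)}\bigr)\prod_{i=2}^{r}(1-q^{-n_i}).
\]
Hence $\Phi_{\mathfrak f'}(l)<\Phi_{\mathfrak f}(l)$ reduces to $(1-q^{-a})(1-q^{-b})<1-q^{-(a+b)}$, which after expanding and multiplying through by $q^{a+b}$ is simply $q^{a}+q^{b}>2$ --- true because $q\ge2$ and $a,b\ge1$. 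For the final ``in particular'', I would iterate this refinement: as long as some base factor $l_i$ of the current factorization is not a power of a single irreducible, peel off a maximal prime-power divisor to write $l_i=gh$ with $g,h$ coprime of positive degree and invoke the inequality just proved; after finitely many steps the base factors are the distinct irreducible divisors of $l$, and the corresponding value is $\Phi(l)$, whence $\Phi(l)\le\Phi_{\mathfrak f}(l)$.

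The whole argument is light; the only points needing a moment's care are verifying that each refinement step preserves pairwise coprimality of the factors (so the product formula continues to apply) and the elementary inequality $q^{a}+q^{b}>2$ --- there is no real obstacle.
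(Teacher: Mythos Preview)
Your proof is correct. For part~(i) you essentially recompute formula~(\ref{ffn}) on the $\Phi_{\mathfrak f}$ side and match the two closed forms; the paper instead defers to the bijective/module-theoretic argument of \cite[Lemma 3.69(iii)]{Lidl}, which you also sketch as an alternative, so the two treatments are close in spirit.

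For part~(ii), however, your route genuinely differs from the paper's. The paper argues purely set-theoretically: since being divisible by $g$ or by $h$ implies being divisible by neither $l_1=gh$, one has $S_{\mathfrak f'}(l)\subseteq S_{\mathfrak f}(l)$; moreover the polynomial $g(x)$ itself lies in $S_{\mathfrak f}(l)$ (it is coprime to each $l_i$, $i\ge 2$, and has degree below $\deg l_1$) yet obviously not in $S_{\mathfrak f'}(l)$, so the containment is strict and $\Phi_{\mathfrak f'}(l)<\Phi_{\mathfrak f}(l)$ follows immediately. Your approach instead evaluates both sides via the product formula and reduces the strict inequality to the elementary $q^{a}+q^{b}>2$. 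The paper's argument is shorter and avoids any computation, while yours has the advantage of being quantitative: it shows exactly by what factor the refinement shrinks the count, which could be useful if one ever needs sharper estimates. Both correctly handle the ``in particular'' clause by iterating down to the irreducible factorization.
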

\begin{proof}
  (i) The proof  is similar to the proof of \cite[Lemma 3.69 (iii)]{Lidl}.

  (ii) Clearly, $S_\mathfrak{f'}(l)\subset S_\mathfrak{f}(l)$ and $g(x)\in S_\mathfrak{f}(l)$ but $g(x)\not\in S_\mathfrak{f'}(l)$. So $S_\mathfrak{f'}(l)\varsubsetneq S_\mathfrak{f}(l)$ and hence $\Phi_\mathfrak{f'}(l)<\Phi_\mathfrak{f}(l)$.
\end{proof}

Recall the assumption that $n=mp^e$ with $p\nmid m$ and it remains to consider the case that $e=0$ and $n$ is square free.
\begin{lem}{\rm (\cite[Theorems 2.45 and 2.47]{Lidl})}  \label{ffcc}
For $e=0$, i.e., $p\nmid n$, we have
\begin{equation*}
   x^{n}-1=\prod_{d|n}\varphi_d(x)=\prod_{d|n}\prod_{j=1}^{\phi(d)/\tau(d)}h_{d{_j}}(x),
\end{equation*}
where $\varphi_d(x)$ denotes the $d$th cyclotomic polynomial of degree $\phi(d)$, and has $\phi(d)/\tau(d)$ distinct monic irreducible factors $h_{d{_j}}(x)$, each of degree $\tau(d)$.
\end{lem}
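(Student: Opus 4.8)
The plan is to establish the two displayed factorizations in turn. First I would record that since $p\nmid n$, the polynomial $x^{n}-1$ is separable over $\mathbb{F}_q$: its formal derivative $nx^{n-1}$ is nonzero and shares no root with $x^{n}-1$, so $x^{n}-1$ splits into $n$ distinct linear factors over its splitting field. These $n$ roots form the cyclic group $\mu_n$ of $n$th roots of unity, and each element of $\mu_n$ has order some divisor $d$ of $n$. Collecting the roots according to their exact order and letting $\varphi_d(x)$ be the product of the $\phi(d)$ primitive $d$th roots of unity, one gets $\deg\varphi_d=\phi(d)$ together with the partition identity $x^{n}-1=\prod_{d\mid n}\varphi_d(x)$. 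That each $\varphi_d(x)$ actually lies in $\mathbb{F}_q[x]$ (indeed in the prime field) follows by induction on $d$ from $\varphi_d(x)=(x^{d}-1)/\prod_{c\mid d,\,c<d}\varphi_c(x)$, since the exact quotient of two monic polynomials over $\mathbb{F}_q$ is again monic over $\mathbb{F}_q$.

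Next I would factor each $\varphi_d(x)$ over $\mathbb{F}_q$. Because $d\mid n$ and $p\nmid n$, we have $p\nmid d$, so $\tau(d)$, the multiplicative order of $q$ modulo $d$, is well defined. The key observation is that a primitive $d$th root of unity $\zeta$ lies in $\mathbb{F}_{q^{k}}$ if and only if $d=\operatorname{ord}(\zeta)$ divides $|\mathbb{F}_{q^{k}}^*|=q^{k}-1$, i.e. if and only if $q^{k}\equiv 1\pmod d$, i.e. if and only if $\tau(d)\mid k$. Hence $\mathbb{F}_q(\zeta)=\mathbb{F}_{q^{\tau(d)}}$, so the minimal polynomial of $\zeta$ over $\mathbb{F}_q$ has degree exactly $\tau(d)$. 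Since this holds for every root of $\varphi_d(x)$, and $\varphi_d(x)$ is monic and separable (it divides $x^{n}-1$), it must be the product of $\phi(d)/\tau(d)$ distinct monic irreducible polynomials $h_{d_{1}}(x),\dots,h_{d_{\phi(d)/\tau(d)}}(x)$, each of degree $\tau(d)$. Substituting this into $x^{n}-1=\prod_{d\mid n}\varphi_d(x)$ yields the claimed double product.

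The main obstacle is the middle step — pinning down the degrees of the irreducible factors of $\varphi_d(x)$. Everything there rests on the characterization of which extension $\mathbb{F}_{q^{k}}$ contains a primitive $d$th root of unity, which is just the translation of ``$\zeta$ has multiplicative order $d$'' into a congruence condition on $k$; once that is in hand, the uniformity of the factor degrees, and hence the count $\phi(d)/\tau(d)$, is immediate. The separability noted in the first step is what guarantees that the $h_{d_{j}}(x)$ are pairwise distinct, so no repetition occurs in the product. This is of course exactly \cite[Theorems 2.45 and 2.47]{Lidl}, so in the final write-up I would simply cite it rather than reproduce the argument.
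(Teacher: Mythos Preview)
Your proof is correct and is precisely the standard argument found in \cite[Theorems 2.45 and 2.47]{Lidl}. The paper itself does not prove this lemma at all; it merely states it with the citation, so your final remark---that in the write-up you would simply cite Lidl rather than reproduce the argument---matches exactly what the paper does.
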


{\em Continue Proof of Theorem \ref{mainthm}} Let $n=p_1\dots p_k$ where $p_i$ are distinct primes different from $p$, $k\geq2$ or $k=1$ but $q$ is not a primitive root modulo $n$. We need to show $\mathfrak{L}<\mathfrak{R}$ where $\mathfrak{L}$ and $\mathfrak{R}$ denote the left- and right- hand sides of (\ref{eq1.1}), respectively. By Theorem  \ref{thm11} and Lemma \ref{ffcc}, we have
\begin{equation}\label{ww}
  \Phi(x^{n}-1)= \prod_{d|n}(q^{\tau(d)}-1)^{\phi(d)/\tau(d)}\leq\prod_{d|n}(q^{\phi(d)}-1).
\end{equation}
Furthermore, the second inequality in (\ref{ww}) becomes a strict inequality if $\phi(d)>\tau(d)$ for some divisor $d$, especially, $k=1$ but $q$ is not a primitive root modulo $n$ . Therefore it suffices to prove the strict inequality
\begin{equation}\label{rr11}
  \prod_{d|n}(q^{\phi(d)}-1)<\frac{q-1}{q}\sum_{d|n}\mu(d)q^{n/d}
\end{equation}
holds for $k\geq2$. Let $\Psi_d(x)$ denote the linearized $q$-associate of $\varphi_d(x)$. By Theorem \ref{ffirst}, the corresponding symbolic factorization of $x^{n}-1=\prod_{d|n}\varphi_d(x)$ is
\begin{equation}\label{ffdd}
  \mathfrak{F}:\quad x^{q^n}-x=\bigotimes_{d|n} \Psi_d(x).
\end{equation}
Let $\alpha\in S_\mathfrak{F}(x^{q^n}-x)$. We claim three assertions as below:
\begin{enumerate}
  \item[(i)]The trace of $\alpha$ is nonzero, i.e.,  $\alpha^{q^{n-1}}+\cdots+\alpha^q+\alpha\neq0$.
  \item[(ii)] The degree of $\alpha$ is $n$, i.e., $n$ is the least positive integer $t$ such that $\alpha^{q^t}=\alpha $.
  \item[(iii)] There exists at least one element $\beta\not\in S_\mathfrak{F}(x^{q^n}-x)$ that satisfies both the above conditions (i) and (ii).
\end{enumerate}
Assume that $\alpha^{q^{n-1}}+\cdots+\alpha^q+\alpha=0$. It follows that $\alpha$ is a root of $\bigotimes_{1<d|n} \Psi_d(x)= x^{q^{n-1}}+\cdots+x^q+x$ and this contradicts to the definition of $S_\mathfrak{F}(x^{q^n}-x)$. So assertion (i) is proved. Now assume that  $l<n$ is the least positive integer $t$ such that $\alpha^{q^t}=\alpha$. Then we have $l|\frac{n}{p_i}$ for some $1\leq i\leq k$ and hence $\alpha$ is a root of $\bigotimes_{d|\frac{n}{p_i}} \Psi_d(x)=x^{q^{\frac{n}{p_i}}}-x$, again contradicting to the definition of $S_\mathfrak{F}(x^{q^n}-x)$. So assertion (ii) is true. Assertions (i) and (ii) means that $\alpha$ and its conjugates over $\mathbb{F}_q$ form the roots of an irreducible polynomial with nonzero trace over $\mathbb{F}_q$. Thus by Corollary \ref{lem2} and Theorem \ref{thm11}, we have
\begin{equation}\label{666}
  \# S_\mathfrak{F}(x^{q^n}-x)=\prod_{d|n}(q^{\phi(d)}-1)\leq \frac{q-1}{q}\sum_{d|n}\mu(d)q^{n/d}.
\end{equation}

To prove assertion (iii), we observe that the element $\alpha \in S_\mathfrak{F}(x^{q^n}-x)$ also satisfies other conditions besides (i) and (ii), e.g., $\Psi_1(\alpha)\bigotimes\Psi_n(\alpha)\neq0$ for $k\geq2$. Set $A=\{\beta\in \mathbb{F}_{q^n}: \Psi_1(\beta)\bigotimes\Psi_n(\beta)=0\}$, $A_0=\{\beta\in A: \bigotimes_{1<d|n} \Psi_d(\beta)=0\}$, and $A_i=\{\beta\in A: \bigotimes_{d|\frac{n}{p_i}} \Psi_d(\beta)=0\}$ for $i=1,\dots,k$. Then by the inclusion-exclusion principle, we obtain
\begin{equation*}
\big|A\setminus \bigcup_{i=0}^k A_i\big|= q^{1+\varphi(n)}-q^{\varphi(n)}+\sum_{i=1}^k{k \choose i}(-1)^i q=q^{1+\varphi(n)}-q^{\varphi(n)}-q\geq1.
\end{equation*}
This proves assertion (iii) which says that (\ref{666}) can not be an equality. Thus the strict inequality (\ref{rr11}) holds. The proof is finished.

\section*{Acknowledgements}
 This work was jointly supported by the National Natural Science Foundation of China (11371208), Zhejiang Provincial Natural Science Foundation of China (LY17A010008) and Ningbo Natural Science Foundation (2017A610134), and sponsored by the K. C. Wong Magna Fund in Ningbo University.

\end{document}